\DeclareSymbolFont{EulerExtension}{U}{euex}{m}{n}
\DeclareMathSymbol{\euintop}{\mathop} {EulerExtension}{"52}
\DeclareMathSymbol{\euointop}{\mathop} {EulerExtension}{"48}
\def \id{\operatorname{id}}
\def \k{\Bbbk}
\def \dim{\operatorname{dim}}
\def \span{\operatorname{span}}
\numberwithin{equation}{section}
\newtheorem{theorem}{Theorem}[section]
\newtheorem{lemma}[theorem]{Lemma}
\newtheorem{proposition}[theorem]{Proposition}
\newtheorem{corollary}[theorem]{Corollary}
\newtheorem{definition}[theorem]{Definition}
\newtheorem{remark}[theorem]{Remark}
\begin{document}
\title[Derived discrete Hopf algebras]{Derived discrete Hopf algebras with the Chevalley property}

\author[J. Yu]{Jing Yu}
\author[G. Liu]{Gongxiang Liu}
\address{School of Mathematics, Nanjing University, Nanjing 210093, China}
\email{dg21210018@smail.nju.edu.cn}
\address{School of Mathematics, Nanjing University, Nanjing 210093, China}
\email{gxliu@nju.edu.cn}

\thanks{2020 \textit{Mathematics Subject Classification}. 16T05, 18G80 (primary), 18M05, 16G10 (secondary)}
\keywords{Hopf algebras, Chevalley property, Derived representation type, Monoidal triangulated categories}
\thanks{The second author was supported by National Natural Science Foundation of China (NSFC) Grant 12271243.}
\thanks{The first author is the corresponding author.}
\maketitle

\date{}
\begin{abstract}
We try to classify Hopf algebras with the Chevalley property according to their derived representation type. We show that a finite-dimensional indecomposable non-semisimple Hopf algebra $H$ with the Chevalley property is derived discrete if and only if it is isomorphic to $(A(n, 2, \mu, -1))^*$. Besides, we give a description for the indecomposable objects in $\mathcal{D}^b((A(n, 2, \mu, -1))^*)$ and determine their tensor products.
\end{abstract}
\maketitle
\section{Introduction}
The bounded derived categories of algebras have been studied widely since Happel's work \cite{Hap88}. The derived representation type of algebras has received considerable attention. See, for example, \cite{BDF09, BGV21, BM03, BPP17, Bro18, Bru01, CZ19, Zha16}. In particular, the notions of derived discreteness, derived tameness and derived wildness were introduced in \cite{Vos01, del98, GK02}. Meanwhile the tame-wild dichotomy for bounded derived categories of finite-dimensional algebras was established in \cite{BD03}, see also, \cite{Bau07}.

In recent years, much effort was put in the study of monoidal triangulated categories from a Hopf algebraic perspective. See, for instance, \cite{Qi23, Vas24, XL23, ZZ22}. It should be pointed out that the bounded derived categories of finite-dimensional Hopf algebras are monoidal triangulated categories. One of the most important topics in the study of bounded derived categories of Hopf algebras is the decomposition of a tensor product of two indecomposable objects into a direct sum of indecomposables. In compare with the module category, the tensor product in the bounded derived category is more complicated.

Note that a finite-dimensional Hopf algebra is either derived discrete or derived wild (see Lemma \ref{lem:discreteortame}). The bounded derived category of a derived discrete Hopf algebra is easiest to understand. We cannot help but hope to give an accurate description for the tensor product in the bounded derived categories of derived discrete Hopf algebras.

The aim of this paper is to classify derived discrete Hopf algebras with the Chevalley property. Here by the Chevalley property we mean that the radical is a Hopf ideal. We know that the Hopf algebras with the (dual) Chevalley property is a kind of natural generalization of elementary (pointed) Hopf algebras. These Hopf algebras have been studied by many authors. See, for example, \cite{AEG01, AGM17, Li22, YLL24}.

Our main result is the following one:
\begin{theorem}\label{thm:indecom}
Let $H$ be a finite-dimensional indecomposable non-semisimple Hopf algebra over $\k$ with the Chevalley property. Then $H$ is derived discrete if and only if $H\cong (A(n, 2, \mu, -1))^*$ as Hopf algebras.
\end{theorem}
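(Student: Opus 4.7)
The plan is to prove the two implications separately, exploiting duality $H \leftrightarrow H^*$ (which exchanges the Chevalley and dual Chevalley properties), the Morita invariance of derived discreteness, and the classification of derived discrete self-injective algebras.

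For the sufficiency direction, I would take the presentation of $A(n,2,\mu,-1)$ (a family of pointed Hopf algebras from earlier in the paper) and compute the quiver with relations of its dual. I expect $(A(n,2,\mu,-1))^*$ to be self-injective and derived equivalent to a self-injective Nakayama algebra of cycle length $n$ with nilpotency index controlled by the parameter $2$. Derived discreteness then follows directly from the Bobinski--Geiss--Skowronski classification of derived discrete self-injective algebras, inside which such one-cycle Nakayama algebras appear.

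For necessity, let $H$ satisfy the hypotheses. First, pass to the basic algebra $H_b$ that is Morita equivalent to $H$; since derived discreteness is Morita invariant, $H_b$ is still derived discrete, non-semisimple, indecomposable and self-injective. By the classification of derived discrete self-injective algebras, the Ext-quiver of $H_b$ is forced to be a single oriented cycle with monomial Nakayama-type relations. Dualising, this controls the link quiver of $H^*$: combining it with the dual Chevalley property of $H^*$ (the coradical is a Hopf subalgebra) and an Andruskiewitsch--Schneider-type analysis, the coradical of $H^*$ is forced to be the group algebra of a cyclic group of order $n$, the associated graded Hopf algebra $\gr H^*$ is generated in degrees $0$ and $1$ by a single skew-primitive element with quantum parameter $-1$, and the relations match those of the pointed Hopf algebra $A(n,2,0,-1)$. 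A lifting argument in the spirit of Andruskiewitsch--Schneider then recovers the deformation parameter $\mu$, yielding $H^* \cong A(n,2,\mu,-1)$ and hence $H \cong (A(n,2,\mu,-1))^*$.

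The main obstacle is the lifting step: the ring-theoretic constraints from derived discreteness fix only the graded Hopf algebra $\gr H^*$, and one must show that every lifting of this graded algebra to an actual Hopf algebra is realised by the one-parameter family $\{A(n,2,\mu,-1)\}_\mu$, up to the natural equivalence on $\mu$. Handling this within the Chevalley (rather than the standard pointed) setting, and ensuring that no non-isomorphic Hopf structures are accidentally identified with one another, is where the main technical effort lies.
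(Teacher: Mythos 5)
Your ``if'' direction is essentially the paper's: the paper also identifies $(A(n,2,\mu,-1))^*$ with $\k\Bbb{Z}_n^a/J^2$ as an algebra and concludes derived discreteness, the only cosmetic difference being that it quotes the derived tameness criterion for Nakayama algebras \cite[Lemma 3.2]{BGV21} together with the discrete-or-wild dichotomy for self-injective algebras rather than the Bobi\'{n}ski--Gei\ss--Skowro\'{n}ski/Vossieck classification directly. The ``only if'' direction is where your route diverges and where there is a genuine gap. Your argument via the basic algebra only controls $H$ up to Morita equivalence: it shows the Ext-quiver of $H$ is a single oriented cycle with radical square zero, but it says nothing about the dimensions of the simple $H$-modules. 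Since $H$ is only assumed to have the Chevalley property, $H/\operatorname{rad}(H)$ is a semisimple Hopf algebra that need not be commutative, i.e.\ $H^*$ need not be pointed a priori; your assertion that ``the coradical of $H^*$ is forced to be the group algebra of a cyclic group of order $n$'' is precisely the non-trivial point, and it does not follow from the quiver data plus a routine Andruskiewitsch--Schneider analysis (that machinery presupposes pointedness). Ruling out non-pointed coradicals, and then identifying the Hopf structure, is exactly the content of the classification theorem \cite[Theorem 5.15]{YLL24} that the paper invokes, so the step you defer as ``the main technical obstacle'' is not a finishing touch but the core of the proof.

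It is also worth noting that the paper's logical route is both weaker in its input and cleaner in its output: it first downgrades derived discreteness to finite representation type (via Bautista's theorem on strongly unbounded type, Lemma \ref{lem:finiterep}), applies the finite-corepresentation-type classification to get $(H^*)_{(1)}\cong A(n,d,\mu,q)$ for some $d$, uses the link-indecomposable decomposition of \cite[Corollary 4.10]{Li22} together with indecomposability of $H$ to conclude $H^*=(H^*)_{(1)}$, and only then returns to the derived category to force $d=2$ and $q=-1$ via the Nakayama algebra $\k\Bbb{Z}_n^a/J^d$. If you want to salvage your approach, you must either prove pointedness of $H^*$ from the derived-discreteness constraints within the dual Chevalley setting, or cite a classification result that already incorporates it.
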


At the same time, gentle algebras form a class of biserial algebras that were introduced in \cite{AS87}. It has transpired since that they naturally appear in many different contexts. See, for example \cite{BC21,Bro12, WS23}. It was shown that for a gentle algebra its bounded derived category is derived tame \cite{BM03}.
In fact, a derived discrete indecomposable non-semisimple Hopf algebra $H$ with the Chevalley property is a gentle algebra. Using \cite[Theorem 3]{BM03}, we describe the indecomposable objects in $\mathcal{D}^b(H)$ and determine their tensor products (see Propositions \ref{prop:indecom} and \ref{prop:tensor}). As a byproduct, we discuss when a gentle quiver admits a Hopf algebra structure (Corollary \ref{coro:gentle}).

The organization of this paper is as follows: Some definitions, notations and results related to derived representation type, monoidal triangulated categories, Hopf algebras with the (dual) Chevalley property and comonomial Hopf algebras are presented in Section \ref{section2}. We devote Section \ref{section3} to classify Hopf algebras with the dual Chevalley property according to their derived representation type. We show that a finite-dimensional indecomposable non-semisimple Hopf algebra $H$ with the Chevalley property is derived discrete if and only if it is isomorphic to $(A(n, 2, \mu, -1))^*$. At last, we give a description for the indecomposable objects in $\mathcal{D}^b((A(n, 2, \mu, -1))^*)$ and determine their tensor products.
\section{Preliminaries}\label{section2}
Throughout this paper $\k$ denotes an \textit{algebraically closed field of characteristic $0$} and all spaces are over $\k$. The tensor product over $\k$ is denoted simply by $\otimes$.
About general background knowledge, the reader is referred to \cite{Hap88} for derived categories, \cite{Mon93} for Hopf algebras, \cite{EGNO15} for monoidal categories and \cite{ASS06} for representation theory.
\subsection{Derived representation type}\label{subsection2.1}
Let $A$ be a finite-dimensional $\k$-algebra. We denote by $A$-mod the category of finitely generated left $A$-modules, by $\mathcal{D}(A)$ its derived category, and by $\mathcal{D}^b(A)$ the derived category of bounded complexes whose terms are in $A$-mod.

With $X\in \mathcal{D}^b(A)$ we associate the sequence $\underline{\dim}X:=(\underline{\dim}H^i(X))_{i\in \Bbb{Z}}$, where $\underline{\dim}H^i(X)$ is the dimension vector of the cohomology module $H^i(X)$, namely, its class in the Grothendieck group $\mathcal{K}_0(A)$ of $A$-mod.

Recall that a \textit{rational algebra} is an algebra over $\k$ of the form:
$$\k[x]_h=\{f/ h^m\mid m \;\text{is a positive integer}, f\in \k[x]\}, $$
the support of a rational algebra is defined by $$S(\k[x]_h)=\{\lambda\in\k\mid h(\lambda)\neq0\}.$$ For $\lambda\in S(\k[x]_h)$, the simple $\k[x]_h$-module $\k[x]/(x-\lambda)$ will be denoted by $S_{\lambda}$.

Now let us recall the concept of derived representation type; see for instance \cite{Bau07, Vos01}.

We shall say that $A$ is \textit{derived discrete} if for any positive element $\underline{d}$ of $\mathcal{K}_0(A)^{(\Bbb{Z})}$, there are only finitely many isoclasses of indecomposables $X\in\mathcal{D}^b(A)$ such that $\underline{\dim}X=\underline{d}$. $A$ is called \textit{derived tame}, if for any $\underline{d}$ of $\mathcal{K}_0(A)^{(\Bbb{Z})}$, there is a finite set of rational algebras $R_u, u=1,\cdots, s$ and for each $u$ a bounded complex $M_u$ of $A$-$R_u$-bimodules free finitely generated over $R_u$, such that for almost all isomorphism classes $[X]$ with $\underline{\dim}X=\underline{d}$ there is a $\lambda\in S(R_u)$ with $X\cong M_u\otimes_{R_u}S_\lambda$ for some $u\in\{1, \cdots, s\}$. $A$ is called \textit{derived wild} if there is a bounded complex $W$ of $A$-$\k\langle x, y\rangle$-bimodules free finitely generated over $\k\langle x, y\rangle$ such that the functor $$W\otimes_{\k\langle x, y\rangle}-:\k\langle x, y\rangle \text{-mod}\rightarrow \mathcal{D}^b(A)$$ preserves isoclasses and indecomposables.

Note that every derived discrete algebra is derived tame.
\subsection{Monoidal triangulated categories}\label{subsection2.2}
Recall that a \textit{monoidal triangulated category} $\mathcal{C}$ is a triangulated category having a monoidal structure $$\otimes: \mathcal{C}\times \mathcal{C}\rightarrow \mathcal{C}$$ and a unit object $1\in \mathcal{C}$, such that the bifunctor $-\otimes-$ is exact in each variable (see, for example, \cite{NVY22a, NVY22b, Vas24}).
Two monoidal triangulated categories $\mathcal{C}$ and $\mathcal{C}^\prime$ are said to be \textit{monoidal triangulated equivalent} if there is a monoidal functor making $\mathcal{C}$ and $\mathcal{C}^\prime$ be triangulated equivalent.

Let $H$ be a finite-dimensional Hopf algebra. It is well-known that the bounded derived category $\mathcal{D}^b(H)$ is a monoidal triangulated category. See, for example, \cite[Example 3.3]{Big07}. The tensor product in $\mathcal{D}^b(H)$ is defined as follow:
for any $X^\bullet=(X^n, d^n_X)_{n\in\Bbb{Z}}, Y^\bullet=(Y^m, d^m_Y)_{m\in\Bbb{Z}}\in \mathcal{D}^b(H)$,  $$(X^\bullet\otimes Y^\bullet)^n:=\bigoplus_{i+j=n}X^i\otimes Y^{j}$$
with differential
$$d\mid_{X^i\otimes Y^j}=d^i_{X}\otimes \id_{Y^j} +(-1)^i\id_{X^i}\otimes d_Y^j.$$
Let $[1]$ be the shift functor of $\mathcal{D}^b(H)$, i.e. $(X^\bullet[1])^n=X^{n+1}$ for any $n\in \Bbb{Z}$ and $d_{X^\bullet[1]}=-d_{X^\bullet}$. There exists a natural isomorphism $$X^\bullet[1]\otimes Y^\bullet\cong (X^\bullet\otimes Y^\bullet)[1]\cong X^\bullet\otimes Y^\bullet[1]$$ in $\mathcal{D}^b(H)$.
\subsection{Hopf algebras with the (dual) Chevalley property}\label{subsection2.3}
Recall that a finite-dimensional Hopf algebra is said to have the \textit{Chevalley property}, if its radical is a Hopf ideal. A finite-dimensional Hopf algebra is said to have the \textit{dual Chevalley property}, if its coradical is a Hopf subalgebra. According to \cite[Propersition 4.2]{AEG01}, we know that a finite-dimensional Hopf algebra $H$ has the Chevalley property if and only if $H^*$ has the dual Chevalley property.

Let $H$ be a coalgebra over $\k$ and $\mathcal{S}$ be the set of all the simple subcoalgebras of $H$. The \textit{link quiver} $\mathrm{Q}(H)$ of $H$ is defined as follows: the vertices of $\mathrm{Q}(H)$ are the elements of $\mathcal{S}$; for any simple subcoalgebra $C, D\in \mathcal{S}$ with $\dim_{\k}(C)=r^2, \dim_{\k}(D)=s^2$, there are exactly $\frac{1}{rs}\dim_{\k}((C\wedge D)/(C+D))$ arrows from $D$ to $C$ (\cite[Definition 4.1]{CHZ06}).
A subcoalgebra $H^\prime$ of $H$ is called \textit{link-indecomposable} if the link quiver $\mathcal{Q}(H^\prime)$ of $H^\prime$ is connected (as an undirected graph).
A \textit{link-indecomposable component} of $H$ is a maximal link-indecomposable subcoalgebra (\cite[Definition 1.1]{Mon95}).

According to \cite[Theorem 3.2]{Mon95}, the link-indecomposable component $H_{(1)}$ containing $\k1$ must be a normal Hopf subalgebra for any pointed Hopf algebra $H$. By \cite[Proposition 3.16]{Li22}, when $H$ is a Hopf algebra with the dual Chevalley property, $H_{(1)}$ is still a Hopf subalgebra of $H$. However, concerning a non-pointed Hopf algebra $H$ with the dual Chevalley property, $H_{(1)}$ may not necessarily be normal (see for example \cite[Example 6.1]{YLL24}). This tells us that the Hopf algebras with the (dual) Chevalley property is really a nontrivial generalization of elementary (pointed) Hopf algebras in general.

\subsection{Comonomial Hopf algebras}\label{subsection2.4}
Let $\mathrm{Q}=(\mathrm{Q}_0, \mathrm{Q}_1)$ be a finite quiver. Note that we read paths in $\mathrm{Q}$ from right to left. Denote by $\k \mathrm{Q}^a$ and $\k\mathrm{Q}^c$ the path algebra of $\mathrm{Q}$ and the path coalgebra of $\mathrm{Q}$, respectively.

Recall that the counit and comultiplication of path coalgebra $\k\mathrm{Q}^c$ are defined by $\varepsilon(e)=1, \ \Delta(e)=e \otimes e$ for
each $e \in Q_0,$ and for each nontrivial path $p=a_n \cdots a_1, \varepsilon(p)=0,$
\begin{eqnarray*}
\Delta(a_n \cdots a_1)=p \otimes s(a_1) + \sum_{i=1}^{n-1}a_n \cdots
a_{i+1} \otimes a_i \cdots a_1  + t(a_n) \otimes p  .
\end{eqnarray*}

A subcoalgebra $C$ of $\k \mathrm{Q}^c$ is called \textit{comonomial} (\cite[Definition 1.2]{CHYZ04}) provided that the following conditions are satisfied:
\begin{itemize}
  \item[(1)]$C$ contains all vertices and arrows in $\mathrm{Q}$;
  \item[(2)]$C$ is contained in subcoalgebra $C_d(\mathrm{Q}):=\bigoplus\limits_{i=0}^{d-1}\k \mathrm{Q}(i)$ for some $d\geq 2$, where $\mathrm{Q}(i)$ is the set of all paths of length $i$ in $\mathrm{Q}$;
  \item[(3)]$C$ has a basis consisting of paths.
\end{itemize}
A finite-dimensional Hopf algebra is called \textit{comonomial} if it is comonomial as a coalgebra.

The authors of \cite{CHYZ04} constructed non-cosemisimple comonomial Hopf algebras via group data. Let us briefly recall their results.

A \textit{group datum} (see \cite[Definition 5.3]{CHYZ04}) over $\k$ is defined to be a sequence $\alpha=(G, g, \chi, \mu)$ consisiting of
\begin{itemize}
  \item[(1)]a finite group $G$, with an element $g$ in its center;
  \item[(2)]a one-dimensional $\k$-representation $\chi$ of $G$; and
  \item[(3)]an element $\mu\in \k$ such that $\mu=0$ if $o(g)=o(\chi(g))$, and that if $\mu\neq 0$, then $\chi^{o(\chi(g))}=1$.
  \end{itemize}

For a group datum $\alpha=(G, g, \chi, \mu)$ over $\k$, the authors of \cite{CHYZ04} gave the corresponding Hopf algebra structure $A(\alpha)$ as follow.
\begin{definition}\emph{(}\cite[Subsection 5.7]{CHYZ04}\emph{)}
For a group datum $\alpha=(G, g, \chi, \mu)$ over $\k$, define $A(G, g, \chi, \mu)$ to be an associative algebra with generators $x$ and all $h\in G$, with relations
$$
 x^d=\mu(1-g^d),\;\; xh=\chi(h)hx,\;\; \forall h\in G,
$$
where $d=o(\chi(g))$.
The comultiplication $\Delta$, counit $\varepsilon$, and the antipode $S$ are given by
$$
\Delta(g)=g\otimes g,\;\;
\Delta(x)=x\otimes 1+g\otimes x,\;\;\varepsilon(g)=1,\;\; \varepsilon(x)=0,\;\;
S(g)=g^{-1},\;\;S(x)=-xg^{-1}.
$$
\end{definition}
If $\mu(1-g^d)=0$ we say that $A(G, g, \chi, \mu)$ is of \textit{nilpotent type}; otherwise we say that $A(G, g, \chi, \mu)$ is of \textit{non-nilpotent type} (see \cite{KR06, WLZ14}).

\begin{remark}
In the case $A(G, g, \chi, \mu)$ is of nilpotent type, then it is either $\mu=0$ or $1-g^d=0$. In both cases, \cite[Proposition 2.1]{WLZ14} implies that the Hopf algebras constructed from $(G, \chi, g, \mu)$ and $(G, \chi, g, 0)$ respectively are isomorphic.
Moreover, if $A(G, g, \chi, \mu)$ is of non-nilpotent type, it follows from \cite[Lemma 5.8]{CHYZ04} that $A(G, g, \chi, \mu)\cong A(G, g, \chi, 1)$ as Hopf algebras (see also \cite[Corollary 1]{KR06}). Because of this fact, we may assume that $\mu=0$ or $\mu=1$.
\end{remark}
The authors in \cite{KR06} examined the dual of $A(G, g, \chi, \mu)$. Let $\{p_{hx^m}\mid h\in G, 0\leq m< d\}$ denote the basis for $(A(G, g, \chi, \mu))^*$ dual to the basis $\{hx^m\mid h\in G, 0\leq m< d\}$ for $A(G, g, \chi, \mu)$. Define $\xi\in (A(G, g, \chi, \mu))^*$ by $\xi(hx^m)=\delta_{m, 1}$ for all $h\in G$ and $0\leq m< d$. By slight abuse the notation we will regard the one-dimensional $\k$-representation $\chi$ of $G$ as an element of $(A(G, g, \chi, \mu))^*$ by setting $\chi(hx^m)=\chi(h)\delta_{m, 0}$ for all $h\in G$ and $0\leq m< d$.

\begin{lemma}\emph{(}\cite[Proposition 2]{KR06}\emph{)}\label{lem:Hopfstr}
Let $\alpha=(G, g, \chi, \mu)$ be a group datum. As an algebra $(A(\alpha))^*$ may be regard as being generated by symbols $p_h$ for $h\in G$ and $\xi$ subject to the relations
$$
\sum_{h\in G}p_h=1,\;\; p_{h}p_{h^\prime}=\delta_{h, h^\prime}p_h,\; \text{and}\;\;\xi p_h= p_{gh} \xi
$$
for all $h, h^\prime\in G$ and $\xi^n=0$. The coalgebra structure of $(A(\alpha))^*$ is determined by
$$
\Delta(\xi)=\xi\otimes \chi+1\otimes \chi
$$
and
$$
\Delta(p_h)=\sum\limits_{u\in G}p_u\otimes p_{u^{-1}h}+\sum\limits_{u\in G}(\sum\limits_{r+s=d}\frac{\mu \chi(u^{-1}h)^r}{(r)_q!(s)_q!}\xi^rp_u\otimes \xi^s(p_{u^{-1}g^{-d}h}-p_{u^{-1}h}))
$$
for all $h\in G,$ where $$(r)_q!:=1_q2_q\cdots r_q,\;\; (0)_q!:=1,\;\;r_q=1+q+\cdots+q^{r-1}.$$
\end{lemma}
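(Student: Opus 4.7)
The plan is to read off the Hopf algebra structure on $(A(\alpha))^*$ by dualizing that of $A(\alpha)$ against the basis $\{hx^m : h \in G,\, 0 \leq m < d\}$; the algebra relations on the dual come from the coproduct of $A(\alpha)$, and the coalgebra structure from its product. The key computational input is a closed form for $\Delta(hx^m)$: since $(g \otimes x)(x \otimes 1) = q(x \otimes 1)(g \otimes x)$ with $q = \chi(g)$, the quantum binomial theorem together with the commutation $x^k g = q^k g x^k$ yields
\[
\Delta(hx^m) = \sum_{k=0}^{m} \binom{m}{k}_q q^{k(m-k)}\, hg^{m-k}x^k \otimes hx^{m-k}.
\]

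For the algebra relations, $\sum_h p_h = \varepsilon = 1$ is immediate from $\varepsilon(hx^m) = \delta_{m,0}$, and the formulas $p_h p_{h'} = \delta_{h,h'} p_h$ and $\xi p_h = p_{gh}\xi$ follow by evaluating both sides on a basis element $k x^m$ using the above coproduct; in each case the two delta conditions from pairing $p_h \otimes p_{h'}$ (or $\xi \otimes p_h$ and $p_{gh} \otimes \xi$) against the sum force only a single surviving term. For $\xi^d = 0$, iterating $\Delta$ distributes the $m < d$ copies of $x$ in $hx^m$ across $d$ tensor slots, so no summand of $\Delta^{(d-1)}(hx^m)$ places an $x$ in every slot and hence $\xi^{\otimes d}$ annihilates it.

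For the coalgebra structure one dualizes the product, using $x^m h' = \chi(h')^m h' x^m$ and hence $hx^m \cdot h'x^{m'} = \chi(h')^m hh' x^{m+m'}$ (modulo the relation $x^d = \mu(1 - g^d)$). Pairing with $\xi$ picks out the coefficient of $x^1$ and recovers a coproduct of $\xi$ of the form $\xi \otimes \chi + 1 \otimes \xi$. The main obstacle is the formula for $\Delta(p_h)$, where the relation $x^d = \mu(1 - g^d)$ contributes extra terms whenever $d \leq m + m' < 2d$. In the unreduced range $m + m' < d$, evaluation forces $m = m' = 0$ and gives precisely the first sum $\sum_u p_u \otimes p_{u^{-1}h}$. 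In the reduced range one substitutes $x^{m+m'} = \mu(1 - g^d)\, x^{m+m'-d}$, so that $p_h(hx^m \cdot h'x^{m'})$ becomes a scalar multiple of $\delta_{h',e} - \delta_{h',g^{-d}}$ supported on $m + m' = d$. To convert this into the stated expression I would use the identity $\xi^r p_u = p_{g^r u}\xi^r$ together with the evaluation $\xi^r(\ell x^t) = \delta_{t,r}(r)_q!\, q^{\binom{r}{2}}$, which identifies each $p_{hx^r}$ as a scalar multiple of $\xi^r p_{g^{-r}h}$. Reparametrizing with $r + s = d$ and collecting terms then produces the second sum in the statement; the $((r)_q!(s)_q!)^{-1}$ normalization is exactly what compensates for the $q$-binomial coefficients appearing in $\Delta(x^{r+s})$, and the twist $\chi(u^{-1}h)^r$ records the scalars $\chi(h')^m$ arising when the $x$'s are commuted past $h'$ in the original product.
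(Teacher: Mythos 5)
The paper gives no proof of this lemma at all --- it is quoted directly from \cite[Proposition 2]{KR06} --- so the only comparison available is with the original source, and your dualization argument is essentially the standard one used there: extract the algebra structure of $(A(\alpha))^*$ from the $q$-binomial expansion of $\Delta(hx^m)$ in $A(\alpha)$, and the coalgebra structure from the product $hx^m\cdot h'x^{m'}=\chi(h')^m hh'x^{m+m'}$ together with the reduction $x^d=\mu(1-g^d)$, which is exactly what produces the second sum in $\Delta(p_h)$. Your outline is sound, and in fact your computation corrects two slips in the statement as transcribed here: the coproduct must be $\Delta(\xi)=\xi\otimes\chi+1\otimes\xi$ (the printed $1\otimes\chi$ violates the counit axiom), and the natural nilpotency relation is $\xi^{d}=0$ with $d=o(\chi(g))$ (the printed $\xi^n=0$ reflects the notation of \cite{KR06}, whose $n$ is this paper's $d$). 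Two points deserve attention. First, the genuine gap: you verify that the listed relations \emph{hold} and that $p_h$ and $\xi$ generate (via $p_{hx^r}$ being a scalar multiple of $\xi^r p_h$), but the lemma asserts a \emph{presentation}, so you must also show no further relations are needed --- e.g.\ observe that the relations already force the abstractly presented algebra to be spanned by the $|G|\,d$ elements $\xi^r p_h$ with $0\le r<d$, $h\in G$, and compare with $\dim (A(\alpha))^*=|G|\,d$; this step is routine but cannot be omitted. Second, a convention check: with $xg=qgx$ one has $(x\otimes 1)(g\otimes x)=q\,(g\otimes x)(x\otimes 1)$, the reverse of the commutation you wrote, and the resulting choice of $q$ versus $q^{-1}$ propagates into the powers of $q$ in $\Delta(hx^m)$ and into the overall sign of the bracket $p_{u^{-1}g^{-d}h}-p_{u^{-1}h}$; these details must be tracked consistently if the final formula is to match the stated one literally rather than up to a harmless change of convention.
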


Denote by $\Bbb{Z}_n$ the basic cycle of length $n$, i.e., a quiver with $n$ vertices $e_0, e_1, \cdots, e_{n-1}$ and $n$ arrows $a_0, a_1, \cdots a_{n-1}$, where the arrow $a_i$ goes from the vertex $e_i$ to the vertex $e_{i+1}$. In the following, denote $C_d(\Bbb{Z}_n)$ by $C_d(n)$.
According to \cite[Lemma 5.8]{CHYZ04}, we know that $A(G, g, \chi, \mu)\cong C_d(n)\oplus\cdots \oplus C_d(n)$ as coalgebras, where $n=o(g)$ and $d=o(\chi(g))$.

Let $q\in \k$ be an $n$-th root of unit of order $d$. In \cite{AS98} and \cite{Rad77}, Radford and Andruskiewitsch-Schneider have considered the following Hopf algebra $A(n, d, \mu, q)$ which as an associative algebra is generated by $g$ and $x$ with relations
$$
g^n=1, \;\;\;\;x^d=\mu(1-g^d),\;\;\;\;xg=qgx.
$$
Its comultiplication $\Delta$, counit $\varepsilon$, and the antipode $S$ are given by
$$
\Delta(g)=g\otimes g,\;\; \varepsilon(g)=1,\;\;
\Delta(x)=x\otimes 1+g\otimes x,\;\; \varepsilon(x)=0,\;\;
S(g)=g^{-1},\;\;S(x)=-xg^{-1}.
$$
In fact, $(\Bbb{Z}_n, \overline{1}, \chi, \mu)$ with $\chi(\overline{1})=q$ is a group datum and  $A(n, d, \mu, q)=A(\Bbb{Z}_n, \overline{1}, \chi, \mu)$.

\section{Derived representation type of Hopf algebras with the Chevalley property}\label{section3}
In this section, we try to classify finite-dimensional Hopf algebras with the Chevalley property according to their derived representation type.

Let $H$ be a finite-dimensional Hopf algebra. According to \cite[Theorem 2.1.3]{Mon93}, $H$ is a Frobenius algebra, which is self-injective.
Using \cite[Corollary 2.5]{Bau07}, we have the following lemma.
\begin{lemma}\label{lem:discreteortame}
Suppose $H$ is a finite-dimensional Hopf algebra over $\k$, then either $H$ is derived discrete or derived wild.
\end{lemma}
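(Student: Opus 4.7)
The plan is to extract this as a short two-step argument that feeds two external inputs into each other. The lemma is not something one proves from scratch here; rather it is a direct consequence of a structural property of $H$ combined with a known dichotomy theorem for self-injective algebras.

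First I would recall that by \cite[Theorem 2.1.3]{Mon93} any finite-dimensional Hopf algebra $H$ is a Frobenius algebra; in particular $H$ is self-injective. This is the only Hopf-algebraic input needed. The point is that self-injectivity forces the indecomposable projectives to coincide with the indecomposable injectives, and in particular rules out finite global dimension whenever $H$ is non-semisimple, which is precisely the hypothesis under which the derived tame/derived wild classification gets sharpened.

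Next I would invoke the general tame--wild dichotomy for bounded derived categories of finite-dimensional algebras of Geiss, Krause and Bekkert--Drozd (cited in the introduction as \cite{BD03, Bau07}), together with Bautista's refinement \cite[Corollary 2.5]{Bau07}. The latter states, for self-injective finite-dimensional algebras, that derived tame implies derived discrete. Combining the dichotomy (every finite-dimensional algebra is derived tame or derived wild) with this collapse of the tame case, we conclude that $H$ is either derived discrete or derived wild, which is exactly the claim of the lemma.

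Since both ingredients are cited from the literature, there is essentially no obstacle; the only thing to check is that the hypotheses of \cite[Corollary 2.5]{Bau07} are genuinely satisfied by $H$, which reduces to self-injectivity and is handled by the Frobenius property. The write-up will therefore consist of one sentence establishing self-injectivity via \cite[Theorem 2.1.3]{Mon93}, and a second sentence applying \cite[Corollary 2.5]{Bau07} to rule out the derived tame but not derived discrete case.
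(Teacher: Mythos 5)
Your proposal is correct and follows essentially the same route as the paper, which likewise observes that $H$ is a Frobenius, hence self-injective, algebra by \cite[Theorem 2.1.3]{Mon93} and then deduces the discrete/wild dichotomy directly from \cite[Corollary 2.5]{Bau07}. The aside about finite global dimension is unnecessary for the argument but harmless.
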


It is clear that any finite-dimensional semisimple Hopf algebra $H$ is derived discrete. In fact, $\{S[i]\mid S\;\text{is a simple module of }H\text{-mod}\}$ is the set of indecomposable objects in $\mathcal{D}^b(H)$. Next we will focus on the general cases.

Recall that a finite-dimensional algebra $A$ is said to be of \textit{finite representation type} if there are finitely many isoclasses of indecomposable $A$-modules. An algebra $A$ is said to be of \textit{infinite representation type}, if $A$ is not of finite representation type. A finite-dimensional coalgebra $C$ is said to be of \textit{finite corepresentation type}, if the dual algebra $C^*$ is of finite representation type. A finite-dimensional coalgebra $C$ is defined to be of \textit{infinite corepresentation type},
if $C^*$ is of infinite representation type.

It is direct to see the following lemma.
\begin{lemma}\label{lem:finiterep}
Suppose $H$ is a finite-dimensional derived discrete Hopf algebra over $\k$, then $H$ is of finite representation type.
\end{lemma}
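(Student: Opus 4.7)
The plan is to prove the contrapositive: if $H$ is of infinite representation type, then $H$ fails to be derived discrete. To exhibit the failure I need to produce infinitely many pairwise non-isomorphic indecomposable objects of $\mathcal{D}^b(H)$ sharing a single dimension-vector sequence in $\mathcal{K}_0(H)^{(\mathbb{Z})}$.

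The central input is the second Brauer-Thrall conjecture, which is a theorem over an algebraically closed field (Nazarova-Roiter, Bautista): any finite-dimensional $\k$-algebra of infinite representation type admits, for some positive integer $d$, infinitely many pairwise non-isomorphic indecomposable modules all of dimension $d$. Applying this to $H$ yields an infinite family $\{M_i\}_{i\in I}$ of pairwise non-isomorphic indecomposable $H$-modules with $\dim_{\k}M_i=d$ for every $i$.

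Next, I would observe that $\mathcal{K}_0(H)$ is a free abelian group of finite rank equal to the number of isomorphism classes of simple $H$-modules, and that only finitely many nonnegative integer vectors in $\mathcal{K}_0(H)$ have coordinate sum equal to $d$. A pigeonhole argument therefore extracts an infinite subfamily $\{M_{i_k}\}$ sharing a common dimension vector $\underline{d}_0\in\mathcal{K}_0(H)$. Viewing each $M_{i_k}$ as a stalk complex concentrated in degree zero produces pairwise non-isomorphic indecomposable objects of $\mathcal{D}^b(H)$ whose sequence $\underline{\dim}$ equals $\underline{d}_0$ in degree zero and vanishes elsewhere. This contradicts the definition of derived discreteness recalled in Subsection~\ref{subsection2.1}, completing the argument.

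Essentially the only nontrivial step is the appeal to the second Brauer-Thrall theorem; the other steps are bookkeeping. One could alternatively bypass Brauer-Thrall by exploiting self-injectivity of $H$ (Theorem 2.1.3 of \cite{Mon93}) together with Auslander-Reiten theory, which for a self-injective representation-infinite algebra produces a one-parameter family of indecomposable modules of fixed dimension vector directly; but the Brauer-Thrall route is the most transparent and matches the spirit of the paper's phrase \emph{``It is direct to see.''}
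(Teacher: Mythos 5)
Your argument is correct and is essentially the proof the paper gives: the paper also argues by contradiction, invoking Bautista's theorem \cite[Theorem 2.4]{Bau85} (the second Brauer--Thrall theorem) to obtain infinitely many pairwise non-isomorphic indecomposable $H$-modules sharing a dimension vector in $\mathcal{K}_0(H)$, and then regards them as stalk complexes via the embedding of $H$-mod into $\mathcal{D}^b(H)$. The explicit pigeonhole reduction from a fixed dimension to a fixed dimension vector that you include is a harmless refinement already built into the cited result.
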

\begin{proof}
Assume the contrary, i.e., $H$ is of infinite representation type. By \cite[Theorem 2.4]{Bau85}, there is an infinitely family of isomorphism classes of finite-dimensional indecomposable left modules over $H$ with a dimension vector $\underline{v}$ in the Grothendieck group $\mathcal{K}_0(H)$. Note that there is an inclusion from $H$-mod to $\mathcal{D}^b(H)$. This means that $H$ is not derived discrete, which is a contradiction. Thus $H$ is of finite representation type.
\end{proof}
With the help of the preceding lemma, we can now prove:
\begin{lemma}\label{thm:discrete}
Suppose $H$ is a finite-dimensional non-semisimple Hopf algebra over $\k$ with the Chevalley property. If $H$ is derived discrete, then the link-indecomposable component $(H^*)_{(1)}$ containing $\k1$ of $H^*$ is isomorphic to $A(n, 2, \mu, -1)$ as Hopf algebras.
\end{lemma}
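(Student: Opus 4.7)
The plan is to translate derived discreteness of $H$ into a corepresentation-theoretic constraint on $H^*$ via duality, identify $(H^*)_{(1)}$ as a comonomial Hopf algebra of the form $A(n,d,\mu,q)$, and then pin down $d=2$ and $q=-1$ by examining the resulting quotient algebra.

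First I would combine Lemma \ref{lem:finiterep} with duality: $H$ derived discrete forces $H$ of finite representation type, hence $H^*$ of finite corepresentation type. Since $H$ has the Chevalley property, $H^*$ has the dual Chevalley property, and by \cite[Proposition 3.16]{Li22} the link-indecomposable component $(H^*)_{(1)}$ is a Hopf subalgebra of $H^*$. Its dual $((H^*)_{(1)})^*$ is therefore a quotient Hopf algebra of $H$; since $\mathcal{D}^b$ of a quotient algebra embeds fully into $\mathcal{D}^b$ of the ambient algebra while preserving dimension vectors and indecomposability, $((H^*)_{(1)})^*$ is derived discrete as well, and in particular $(H^*)_{(1)}$ is of finite corepresentation type.

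Next I would argue that $(H^*)_{(1)}$ is a comonomial Hopf algebra $A(G, g, \chi, \mu)$ with $G = \langle g\rangle \cong \Bbb{Z}_n$ cyclic. Link-indecomposability together with finite corepresentation type should force the link quiver of $(H^*)_{(1)}$ to be a basic cycle $\Bbb{Z}_n$ as in \S\ref{subsection2.4}, producing the coalgebra decomposition $C_d(n) \oplus \cdots \oplus C_d(n)$. The Hopf structure together with the requirement that $(H^*)_{(1)}$ contains $\k 1$ then forces the grouplikes in $(H^*)_{(1)}$ to be cyclic generated by $g$, so the parametrization in \cite{CHYZ04} recalled in Section \ref{section2} identifies $(H^*)_{(1)} \cong A(n, d, \mu, q)$ for some $d \mid n$, $\mu \in \k$ and some $q \in \k$ a primitive $d$-th root of unity. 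I expect this classification step to be the main obstacle of the proof: extracting the precise form of link-indecomposable, corepresentation-finite Hopf algebras with the dual Chevalley property requires careful use of prior work (for instance \cite{YLL24}), especially to handle the potentially non-pointed situation.

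Finally I would pin down the parameters from the derived discreteness of the quotient. The algebra $A(n, d, \mu, q)^* \cong ((H^*)_{(1)})^*$ is a self-injective Nakayama algebra of Loewy length $d$, since the coalgebra $A(n,d,\mu,q)$ decomposes as $C_d(n)\oplus \cdots \oplus C_d(n)$. By the Vossieck classification of derived discrete algebras (cf.\ \cite{Vos01, Bau07}) a non-semisimple self-injective Nakayama algebra is derived discrete precisely when it is gentle, equivalently when $d \leq 2$; combined with the non-semisimplicity of $H$ ruling out the cosemisimple case $d = 1$, we obtain $d = 2$. Since $q = \chi(\overline{1})$ must then be a primitive $2$-nd root of unity, $q = -1$, and therefore $(H^*)_{(1)} \cong A(n, 2, \mu, -1)$ as required.
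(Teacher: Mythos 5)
Your overall strategy matches the paper's: reduce to finite (co)representation type via Lemma \ref{lem:finiterep}, identify $(H^*)_{(1)}$ with some $A(n,d,\mu,q)$, and then force $d=2$ and $q=-1$ from derived discreteness of its dual. (The identification step you flag as ``the main obstacle'' is exactly \cite[Theorem 5.15]{YLL24} and the paper simply cites it; no further work is needed there. Likewise your endgame via Vossieck/gentleness is equivalent to the paper's use of \cite[Lemma 3.2]{BGV21} combined with Lemma \ref{lem:discreteortame}.)

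The genuine gap is in your passage from ``$H$ is derived discrete'' to ``$((H^*)_{(1)})^*$ is derived discrete''. You justify it by asserting that for a quotient algebra $B=A/I$ the restriction functor $\mathcal{D}^b(B)\to\mathcal{D}^b(A)$ embeds fully and preserves indecomposability. This is false in general: restriction along $A\to A/I$ is not full on derived categories (already $\operatorname{Hom}_{\mathcal{D}^b(\k)}(\k,\k[1])=0$ while $\operatorname{Hom}_{\mathcal{D}^b(\k[x]/(x^2))}(\k,\k[1])\neq 0$ for the quotient $\k=\k[x]/(x^2)/(x)$), so it need neither carry indecomposables to indecomposables nor reflect isomorphisms, and derived discreteness does not descend along arbitrary quotients by this argument. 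The paper circumvents this by invoking \cite[Corollary 4.10]{Li22}: $H^*$ decomposes as a coalgebra as $(H^*)_{(1)}\oplus\bigl(\bigoplus_{C}C(H^*)_{(1)}\bigr)$, so dually $((H^*)_{(1)})^*$ is a \emph{direct algebra factor} of $H$, not merely a quotient. Since $\mathcal{D}^b(B\times C)\simeq\mathcal{D}^b(B)\times\mathcal{D}^b(C)$, derived discreteness does pass to direct factors, and from there your computation $((H^*)_{(1)})^*\cong\k\Bbb{Z}_n^a/J^d$ and the conclusion $d=2$, $q=-1$ go through as in the paper. You should replace your quotient argument with this block decomposition.
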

\begin{proof}
By Lemma \ref{lem:finiterep}, we know that $H$ is of finite representation type. Note that the category of finite-dimensional left $H$-modules is isomorphic to the category of finite-dimensional right $H^*$-comodules. This means that $H^*$ is a a finite-dimensional non-cosemisimple Hopf algebra over $\k$ with the dual Chevalley property of finite corepresentation type. It follows from \cite[Theorem 5.15]{YLL24} that $$(H^*)_{(1)}\cong A(n, d, \mu, q)$$ as Hopf algebras. Let $\mathcal{S}$ be the set of all the simple subcoalgebras of $H^*$. According to \cite[Corollary 4.10]{Li22}, there exists a subset $\mathcal{S}_0$ containing $\k1$ of $\mathcal{S}$ such that
$$H^*=(H^*)_{(1)}\oplus(\bigoplus\limits_{C\in\mathcal{S}_0\backslash \{\k1\}}C(H^*)_{(1)}).$$
This means that $$H\cong (A(n, d, \mu, q))^*\oplus(\bigoplus\limits_{C\in\mathcal{S}_0\backslash \{\k1\}}(CA(n, d, \mu, q))^*).$$
It follows that $(A(n, d, \mu, q))^*$ is derived discrete. Note that $A(n, d, \mu, q)\cong C_d(n)$ as coalgebras. Using \cite[Lemma 1.3]{CHYZ04}, we obtain $$(A(n, d, \mu, q))^*\cong \k \Bbb{Z}_n^a/ J^d$$ as algebras, where $J$ is the ideal generated by arrows.
From \cite[Lemma 3.2]{BGV21}, $\k\Bbb{Z}_n^a/J^d$ is derived tame if and only if $d=2$. Since Lemma \ref{lem:discreteortame} implies that $(A(n, d, \mu, q))^*$ is either derived discrete or derived wild, it follows that $(A(n, d, \mu, q))^*$ is of derived discrete if and only if $d=2$ and $q=-1$.
As a conclusion, we have $$(H^*)_{(1)}\cong A(n, 2, \mu, -1)$$as Hopf algebras.
\end{proof}
We say that a finite-dimensional $\k$-algebra $A$ is \textit{indecomposable} (or \textit{connected}) if $A$ is not a direct product of two algebras. We are in a position to show our main conclusion now:
\begin{proof}[Proof of Theorem \ref{thm:indecom}]
``If part": An argument similar to the one used in the proof of Lemma \ref{thm:discrete} shows that $(A(n, 2, \mu, -1))^*$ is derived discrete.

``Only if part": Let $\mathcal{S}$ be the set of all the simple subcoalgebras of $H^*$. According to \cite[Corollary 4.10]{Li22}, there exists a subset $\mathcal{S}_0$ containing $\k1$ of $\mathcal{S}$ such that
$$H^*=(H^*)_{(1)}\oplus(\bigoplus\limits_{C\in\mathcal{S}_0\backslash \{\k1\}}C(H^*)_{(1)}).$$ Since $H$ is indecomposable, it follows that $$H\cong ((H^*)_{(1)})^*.$$ Using Lemma \ref{thm:discrete}, we can show that $$H\cong (A(n, 2, \mu, -1))^*$$ as Hopf algebras.
\end{proof}
Now let us focus on a special situation.
\begin{corollary}\label{coro:elementary}
Let $H$ be a finite-dimensional non-semisimple elementary Hopf algebra over $\k$. Then $H$ is derived discrete if and only if $H\cong (A(\alpha))^*$ for some group datum $\alpha=(G, g, \chi, \mu)$ with $o(\chi(g))=2$.
\end{corollary}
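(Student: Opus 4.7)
The plan is to treat the two implications separately, both leveraging Lemma~\ref{thm:discrete} together with the coalgebra structure of $A(\alpha)$ from \cite{CHYZ04}.

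For the ``if'' direction, I assume $H \cong (A(\alpha))^*$ with $\alpha=(G,g,\chi,\mu)$ and $d:=o(\chi(g))=2$, and set $n:=o(g)$. By \cite[Lemma~5.8]{CHYZ04} we have $A(\alpha) \cong C_2(n)^{\oplus |G|/n}$ as coalgebras, so \cite[Lemma~1.3]{CHYZ04} yields $H \cong (\k\Bbb{Z}_n^a / J^2)^{\oplus |G|/n}$ as algebras. Each summand is derived tame by \cite[Lemma~3.2]{BGV21}, hence so is $H$; since $H$ is a Hopf algebra, Lemma~\ref{lem:discreteortame} excludes derived wildness and forces derived discreteness.

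For the ``only if'' direction, I assume $H$ is elementary, non-semisimple, and derived discrete. Elementariness makes $H^*$ pointed and forces $H$ to have the Chevalley property via \cite[Proposition~4.2]{AEG01}, while Lemma~\ref{lem:finiterep} upgrades derived discreteness of $H$ into finite corepresentation type of $H^*$. Applying Lemma~\ref{thm:discrete} then gives $(H^*)_{(1)} \cong A(n,2,\mu_0,-1)$ for some $n$ and $\mu_0\in\k$. The remaining task is to globalize this to $H^* \cong A(G, g, \chi, \mu_0')$. I would use the coset decomposition $H^* = (H^*)_{(1)} \oplus \bigoplus_{C \in \mathcal{S}_0 \setminus \{\k 1\}} C(H^*)_{(1)}$ from \cite[Corollary~4.10]{Li22}: pointedness identifies $\mathcal{S}_0$ with coset representatives of $K := G((H^*)_{(1)}) \cong \Bbb{Z}_n$ in $G := G(H^*)$, and normality of $(H^*)_{(1)}$ in $H^*$ (\cite[Theorem~3.2]{Mon95}) provides a conjugation action of $G$ on $(H^*)_{(1)}$ that preserves the coradical filtration; this lets me extend the character $\chi_0$ on $K$ to a character $\chi$ on $G$ and verify $g \in Z(G)$, producing a group datum $\alpha := (G, g, \chi, \mu_0)$ with $o(\chi(g)) = o(-1) = 2$, after which matching generators and relations yields $H^* \cong A(\alpha)$.

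The main obstacle is the globalization step, specifically the centrality of $g$ in $G$ and the existence of the character extension. I would handle the former by noting that for $h \in G$, the conjugate $hxh^{-1}$ of the skew-primitive generator remains a $(hgh^{-1},1)$-skew primitive not in the coradical; since inside $A(n,2,\mu_0,-1)$ the $(g^k,1)$-skew primitives for $k \neq 1$ lie in the coradical, this forces $hgh^{-1} = g$. The character is then constructed by writing $hxh^{-1} = \lambda_h x + \tau_h(g-1)$ and, if necessary, absorbing the $\tau_h(g-1)$-term into a replacement of $x$. An alternative route is to invoke the classification of pointed Hopf algebras of finite corepresentation type implicit in \cite[Theorem~5.15]{YLL24}, which would place $H^*$ directly in the form $A(\alpha)$.
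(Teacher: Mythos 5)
Your ``if'' direction is essentially the paper's argument verbatim: decompose $A(\alpha)\cong C_2(n)\oplus\cdots\oplus C_2(n)$ via \cite[Lemma 5.8]{CHYZ04}, dualize with \cite[Lemma 1.3]{CHYZ04}, get derived tameness from \cite[Lemma 3.2]{BGV21}, and upgrade to discreteness with Lemma \ref{lem:discreteortame}. For the ``only if'' direction, however, you take a genuinely different and more laborious route. The paper goes: derived discrete $\Rightarrow$ finite representation type (Lemma \ref{lem:finiterep}) $\Rightarrow$ $H^*$ is a pointed Hopf algebra of finite corepresentation type $\Rightarrow$ $H^*\cong A(\alpha)$ by \emph{citing} the classification \cite[Theorem 4.6]{LL07}, and only then reads off $o(\chi(g))=2$ from the coalgebra decomposition together with \cite[Lemma 3.2]{BGV21}. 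You instead start from Lemma \ref{thm:discrete} (so $d=2$ is known from the outset) and then re-derive, by hand, the relevant special case of that classification: identifying $(H^*)_{(1)}\cong A(n,2,\mu_0,-1)$, using normality and the coset decomposition to reduce to group-likes acting on the skew-primitive $x$, and extracting the group datum. Your sketch of the hard points is sound --- the centrality of $g$ does follow because the only non-coradical $(c,1)$-skew primitives in $A(n,2,\mu_0,-1)$ occur for $c=g$, and in fact the ``absorption'' of the $\tau_h(g-1)$ terms is unnecessary here: comparing $hx^2h^{-1}=\mu_0(1-g^2)$ with $(\lambda_h x+\tau_h(1-g))^2=\lambda_h^2x^2+2\lambda_h\tau_h x+\tau_h^2(1-g)^2$ forces $\tau_h=0$ directly (and naively replacing $x$ by $x+c(1-g)$ would disturb the relation $x^2=\mu_0(1-g^2)$, so it is fortunate you do not need it). What your route buys is independence from the global classification theorem; what it costs is exactly the bookkeeping you flag as the ``main obstacle.'' One correction to your closing remark: the result that places $H^*$ directly in the form $A(\alpha)$ is \cite[Theorem 4.6]{LL07}, not \cite[Theorem 5.15]{YLL24} --- the latter, as used in Lemma \ref{thm:discrete}, only identifies the link-indecomposable component $(H^*)_{(1)}$, which is precisely the gap your globalization step is designed to close.
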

\begin{proof}
Suppose $H\cong (A(\alpha))^*$ for some group datum $\alpha=(G, g, \chi, \mu)$ with $o(\chi(g))=2$. According to \cite[Lemma 5.8]{CHYZ04}, we know that $$A(\alpha)\cong C_2(n)\oplus\cdots \oplus C_2(n)$$ as coalgebras, where $n=o(g)$. From \cite[Lemma 1.3]{CHYZ04}, we obtain $$(A(\alpha))^*\cong \k \Bbb{Z}_n^a/ J^2\oplus\cdots\oplus \k \Bbb{Z}_n^a/ J^2$$ as algebras, where $J$ is the ideal generated by arrows in $\Bbb{Z}_n$.
Using the same argument as in the proof of Lemma \ref{thm:discrete}, we know that $H$ is derived discrete. Conversely, since $H$ is derived discrete, it follows from Lemma \ref{lem:finiterep} that $H$ is of finite representation type. This means that $H^*$ is of finite corepresentation type. By \cite[Theorem 4.6]{LL07}, we have $H^*\cong A(\alpha)$ for some group datum $\alpha=(G, g, \chi, \mu)$. From \cite[Lemma 5.8]{CHYZ04}, we know that $$A(\alpha)\cong C_d(n)\oplus\cdots \oplus C_d(n)$$ as coalgebras, where $n=o(g)$ and $d=o(\chi(g))$. A same argument as in the proof of Lemma \ref{thm:discrete} shows that $o(\chi(g))=2$.
\end{proof}
It is not difficult to verify the following proposition.
\begin{proposition}\label{prop:equiv}
Let $\alpha=(G, g, \chi, \mu), \alpha^\prime=(G^\prime, g^\prime, \chi^\prime, \mu^\prime)$ be two group datums with $o(\chi(g))=2$ and $o(\chi^\prime(g^\prime))=2$, respectively. Then
 $\mathcal{D}^b((A(\alpha))^*)$ is equivalent to $\mathcal{D}^b((A(\alpha^\prime))^*)$ as triangulated categories if and only if $A(\alpha)\cong A(\alpha^\prime)$ as coalgebras. In particular, $\mathcal{D}^b((A(n, 2, \mu, -1))^*)$ is triangulated equivalent to $\mathcal{D}^b((A(m, 2, \mu^\prime, -1))^*)$ if and only if $n=m$.
\end{proposition}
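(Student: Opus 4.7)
The plan is to reduce the statement to a claim about the radical-square-zero Nakayama algebras $\Lambda_n := \k\Bbb{Z}_n^a / J^2$, using the machinery already invoked in Lemma~\ref{thm:discrete} and Corollary~\ref{coro:elementary}. Setting $n := o(g)$ and using $o(\chi(g)) = 2$, \cite[Lemma 5.8]{CHYZ04} together with \cite[Lemma 1.3]{CHYZ04} give $A(\alpha) \cong C_2(n)^{\oplus |G|/n}$ as coalgebras and, dually, $(A(\alpha))^* \cong \Lambda_n^{\times |G|/n}$ as algebras; analogous formulas hold for $\alpha'$. Because $C_2(n)$ is link-indecomposable and its dimension $2n$ determines $n$, the coalgebras $A(\alpha)$ and $A(\alpha')$ are isomorphic if and only if $n = n'$ and $|G|/n = |G'|/n'$.

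The ``if'' direction is then immediate: a coalgebra isomorphism $A(\alpha) \cong A(\alpha')$ dualises to an algebra isomorphism between the $\k$-duals, which in turn induces a triangulated equivalence of the bounded derived categories.

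For the ``only if'' direction I would invoke two derived invariants. First, each $\Lambda_n$ is connected, hence its bounded derived category is indecomposable as a triangulated category; the block decomposition of $\Lambda_n^{\times r}$ thus induces a decomposition of $\mathcal{D}^b(\Lambda_n^{\times r})$ into $r$ indecomposable triangulated summands, each equivalent to $\mathcal{D}^b(\Lambda_n)$. Any triangulated equivalence $\mathcal{D}^b(\Lambda_n^{\times r}) \simeq \mathcal{D}^b(\Lambda_{n'}^{\times r'})$ therefore forces $r = r'$ together with a factor-wise equivalence $\mathcal{D}^b(\Lambda_n) \simeq \mathcal{D}^b(\Lambda_{n'})$. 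Second, the rank of the Grothendieck group $\mathcal{K}_0(\mathcal{D}^b(\Lambda_n)) \cong \mathcal{K}_0(\Lambda_n) \cong \Bbb{Z}^n$ is a triangulated invariant, which forces $n = n'$. Combining the two invariants with the preceding reduction yields $A(\alpha) \cong A(\alpha')$ as coalgebras.

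The main obstacle I anticipate is justifying cleanly the \emph{uniqueness} of the block decomposition of $\mathcal{D}^b$ into indecomposable triangulated pieces, rather than just its existence. This rests on the standard identification of orthogonal central idempotents of $A$ with idempotent natural endotransformations of $\id_{\mathcal{D}^b(A)}$, which makes the number of indecomposable blocks a triangulated invariant; I would cite this fact rather than reprove it. The ``In particular'' clause then drops out at once: for $\alpha = (\Bbb{Z}_n, \overline{1}, \chi, \mu)$ we have $|G|/n = 1$, so the criterion reduces to $n = m$.
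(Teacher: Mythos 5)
Your proposal is correct, and for the ``only if'' direction it takes a genuinely different route from the paper. Both arguments start from the same reduction $(A(\alpha))^*\cong \k\Bbb{Z}_n^a/J^2\times\cdots\times \k\Bbb{Z}_n^a/J^2$ with $n=o(g)$, but where the paper then invokes the full classification of discrete derived categories \cite[Theorem A]{BGS04} to conclude that $\mathcal{D}^b(\k\Bbb{Z}_m^a/J^2)\simeq\mathcal{D}^b(\k\Bbb{Z}_n^a/J^2)$ forces $m=n$, you replace this by the more elementary observation that the rank of $\mathcal{K}_0(\mathcal{D}^b(\Lambda_n))\cong\mathcal{K}_0(\Lambda_n)\cong\Bbb{Z}^n$ is a triangulated invariant. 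This is a strictly weaker tool than \cite{BGS04} but entirely sufficient here, since the algebras being compared are already known to lie in the one-parameter family $\Lambda_n$. You are also more explicit than the paper about the multiplicity of blocks: the paper passes from the connected case to the direct product without comment, whereas you record that the number of indecomposable orthogonal summands of $\mathcal{D}^b(A)$ equals the number of blocks of $A$ and is preserved by any triangulated equivalence, which is exactly what is needed to recover $|G|/n=|G^\prime|/n^\prime$ and hence the coalgebra isomorphism $C_2(n)^{\oplus |G|/n}\cong C_2(n^\prime)^{\oplus |G^\prime|/n^\prime}$. The uniqueness of the block decomposition that you flag as the main obstacle is indeed standard (an orthogonal decomposition of $\mathcal{D}^b(A)$ splits the stalk complex $A$ by a central idempotent, since $\End_{\mathcal{D}^b(A)}(A)\cong A^{\mathrm{op}}$ and the orthogonality kills $eA(1-e)$ and $(1-e)Ae$), so citing it is appropriate. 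In short: the paper's proof is shorter by leaning on a strong classification theorem; yours is more self-contained and fills in a step the paper leaves implicit.
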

\begin{proof}
Using the same argument as in the proof of Corollary \ref{coro:elementary}, we know that $$(A(\alpha))^*\cong \k \Bbb{Z}_n^a/ J^2\oplus\cdots\oplus \k \Bbb{Z}_n^a/ J^2$$ as algebras, where $J$ is the ideal generated by arrows in $\Bbb{Z}_n$ and $n=o(g)$. According to \cite[Theorem A]{BGS04}, $\mathcal{D}^b(\k\Bbb{Z}_m^a/ J^2)$ is triangulated equivalent to $\mathcal{D}^b(\k\Bbb{Z}_n^a/ J^2)$ if and only if $m=n.$ It turns out that $\mathcal{D}^b((A(\alpha))^*)$ is equivalent to $\mathcal{D}^b((A(\alpha^\prime))^*)$ as triangulated categories if and only if $A(\alpha)\cong A(\alpha^\prime)$ as coalgebras.
\end{proof}
\begin{remark}\label{rm:monoidalequ}
According to \cite{Wan18}, we know that $(A(G, g, \chi, 0))^*$-mod is tensor equivalent to $(A(G, g, \chi, 1))^*$-mod. This means that $\mathcal{D}^b((A(G, g, \chi, 0))^*)$ is monoidal triangulated equivalent to $\mathcal{D}^b((A(G, g, \chi, 1))^*)$.
With the notations in Proposition \ref{prop:equiv}, we know that $\mathcal{D}^b((A(\alpha))^*)$ is monoidal triangulated equivalent to $\mathcal{D}^b((A(\alpha^\prime))^*)$ if and only if $A(\alpha)\cong A(\alpha^\prime)$ as coalgebras, if and only if $(A(\alpha))^*$-mod is tensor equivalent to $(A(\alpha^\prime))^*$-mod.
\end{remark}
Recall that a \textit{gentle quiver} $(\mathrm{Q}, I)$ (\cite{AS87}) is formed by a quiver $\mathrm{Q}$ and an ideal $I$ of $\k \mathrm{Q}^a$ with properties $(a)$-$(d)$:
\begin{itemize}
  \item[(a)]At every vertex of $\mathrm{Q}$ at most two arrows stop and at most two arrows start.
  \item[(b)]$I$ is generated by paths of length two.
  \item[(c)]For every arrow $\beta$, there is at most one arrow $\alpha$ with $\beta\alpha$ not in $I$ and at most one arrow $\gamma$ with $\gamma\beta$ not in $I$.
  \item[(d)]For every arrow $\beta$, there is at most one arrow $\alpha^\prime$ with $\beta\alpha^\prime$ in $I$ and at most one arrow $\gamma^\prime$ with $\gamma^\prime\beta$ in $I$.
\end{itemize}
A $\k$-algebra $A$ is called \textit{gentle} if $A\cong \k \mathrm{Q}^a/ I$, where the pair $(\mathrm{Q}, I)$ is gentle.

As an application of Corollary \ref{coro:elementary}:
\begin{corollary}\label{coro:gentle}
Let $H=\k \mathrm{Q}^a/ I$ be a finite-dimensional gentle algebra over $\k$. Then algebra $H$ admits a Hopf algebra structure if and only if $\mathrm{Q}$ is a disjoint union of basic cycles of length $n$ and $I=J^2$, where $n$ is an even number and $J$ is the ideal generated by arrows.
\end{corollary}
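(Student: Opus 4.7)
The plan is to derive this corollary by combining Corollary \ref{coro:elementary} with the fact, due to Bobi\'nski--Meltzer \cite[Theorem 3]{BM03}, that every finite-dimensional gentle algebra is derived tame. Throughout I focus on the non-semisimple case, i.e.\ $\mathrm{Q}$ has at least one arrow. For the \emph{only if} direction, suppose $H = \k\mathrm{Q}^a/I$ is gentle and admits some Hopf algebra structure. Since $H/\operatorname{rad}(H) \cong \k\mathrm{Q}_0$ is a direct product of copies of $\k$, the algebra $H$ is elementary. Being gentle, $H$ is derived tame and in particular not derived wild, so Lemma \ref{lem:discreteortame} forces $H$ to be derived discrete. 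Corollary \ref{coro:elementary} then furnishes a group datum $\alpha = (G, g, \chi, \mu)$ with $o(\chi(g)) = 2$, i.e.\ $\chi(g) = -1$, for which $H \cong (A(\alpha))^*$ as Hopf algebras. Invoking \cite[Lemma 5.8]{CHYZ04} together with \cite[Lemma 1.3]{CHYZ04} exactly as in the proofs of Lemma \ref{thm:discrete} and Corollary \ref{coro:elementary} yields
\[ H \cong (A(\alpha))^* \cong \underbrace{\k\Bbb{Z}_n^a/J^2 \oplus \cdots \oplus \k\Bbb{Z}_n^a/J^2}_{|G|/n \text{ copies}} \]
as algebras, with $n = o(g)$. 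Since $(-1)^n = \chi(g)^n = \chi(g^n) = 1$, the integer $n$ is even, and comparing this decomposition with $\k\mathrm{Q}^a/I$ shows that $\mathrm{Q}$ is a disjoint union of basic cycles of length $n$ and that $I = J^2$.

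For the \emph{if} direction, given $\mathrm{Q}$ a disjoint union of $k$ copies of $\Bbb{Z}_n$ with $n$ even and $I = J^2$, the plan is to exhibit a group datum $\alpha$ whose dual realizes $\k\mathrm{Q}^a/I$ as an algebra; the Hopf algebra structure on $(A(\alpha))^*$ then transports to $H$. A concrete choice is
\[ G = \Bbb{Z}_n \oplus \Bbb{Z}_k, \qquad g = (\overline{1}, \overline{0}), \qquad \chi(\overline{a}, \overline{b}) = (-1)^a, \qquad \mu = 0. \]
One verifies that $\chi$ is well defined (as $n$ is even), $g$ lies in the centre of the abelian group $G$ with $o(g) = n$, and $\chi(g) = -1$ has order $2$; with $\mu = 0$ both clauses in the definition of a group datum hold trivially. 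A second application of \cite[Lemma 5.8]{CHYZ04} and \cite[Lemma 1.3]{CHYZ04} then gives
\[ (A(\alpha))^* \cong \bigoplus_{i=1}^{k} \k\Bbb{Z}_n^a/J^2 \cong \k\mathrm{Q}^a/I \]
as algebras.

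The entire argument is an assembly of Corollary \ref{coro:elementary}, \cite[Theorem 3]{BM03} and Lemma \ref{lem:discreteortame}, so no serious obstacle is expected. The only modest subtlety is arranging the number of cyclic components in the \emph{if} direction to match the prescribed $k$, which is why I take $G = \Bbb{Z}_n \oplus \Bbb{Z}_k$ so that $|G|/o(g) = k$.
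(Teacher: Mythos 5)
Your proof is correct and follows essentially the same route as the paper: gentle implies derived tame by \cite[Theorem 3]{BM03}, the dichotomy of Lemma \ref{lem:discreteortame} upgrades this to derived discrete, and Corollary \ref{coro:elementary} identifies $H$ with $(A(\alpha))^*$, whose coalgebra decomposition into copies of $C_2(n)$ forces $\mathrm{Q}$ and $I$ to be as claimed. The only divergence is in the ``if'' direction, where the paper cites \cite[Theorem 5.1]{CHYZ04} for the existence of a Hopf structure, whereas you explicitly construct a witnessing group datum $(\mathbb{Z}_n\oplus\mathbb{Z}_k,(\overline{1},\overline{0}),\chi,0)$; this, together with your explicit checks that $H$ is elementary and that $n$ is even (via $\chi(g)^n=\chi(g^n)=1$) and your restriction to the non-semisimple case needed to invoke Corollary \ref{coro:elementary}, only makes explicit details the paper leaves implicit.
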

\begin{proof}
``Only if part": According to \cite[Theorem 3]{BM03}, we know that $H$ is of derived tame type. Since $H=\k \mathrm{Q}^a/ I$ is a Hopf algebra, it follows from Lemma \ref{lem:discreteortame} that $H$ is either derived discrete or derived wild. This implies that $H$ is derived discrete. Using Corollary \ref{coro:elementary}, we know that $$H\cong (A(\alpha))^*$$ as Hopf algebras, where $\alpha=(G, g, \chi, \mu)$ is a group datum with $o(\chi(g))=2$. Note that $A(\alpha)\cong C_n(2)\oplus\cdots\oplus C_n(2)$ as coalgebras. It follows from \cite[Lemma 1.3]{CHYZ04} that $\mathrm{Q}$ is a disjoint union of basic cycles of length $n$ and $I=J^2$, where $n$ is an even number.

``If part": Suppose $\mathrm{Q}$ is a disjoint union of basic cycles of length $n$ and $I=J^2$, where $n$ is an even number. According to \cite[Theorem 5.1]{CHYZ04}, $C_n(2)\oplus\cdots\oplus C_n(2)$ admits a Hopf algebra structure. It is clear that $(C_n(2)\oplus\cdots\oplus C_n(2))^*$ also admits a Hopf algebra structure.
\end{proof}

\section{Characterization of $\mathcal{D}^b((A(n, 2, \mu, -1))^*)$}\label{section4}
In this section, let $H= (A(n, 2, \mu, -1))^*$. We will give a description for the indecomposable objects in the bounded derived category of $H$ and determine their tensor products.

Note that $H\cong \k \Bbb{Z}_n^a/J^2$ as algebras, where $J$ is the ideal generated by arrows. This means that we can describe $H$ by quiver and relations: The quiver is cyclic, with $n$ vertices $e_0, e_1, \cdots, e_{n-1}$ and $n$ arrows $a_0, a_1, \cdots, a_{n-1}$, where the arrow $a_i$ goes from the vertex $e_i$ to the vertex $e_{i+1}$, and we factor by the ideal generated by all paths of length $2$. The indices are viewed as elements in the cyclic group $\Bbb{Z}_n$ and are written mod $n$. Notice that if $\{p_{g^ix^j}\mid 0\leq i\leq n-1, 0\leq j\leq 1\}$ denotes the basis for $(A(n, 2, \mu, -1))^*$ dual to the basis $\{g^ix^j\mid 0\leq i\leq n-1, 0\leq j\leq 1\}$ for $A(n, 2, \mu, -1)$, the correspondence is determined by $p_{g^i}\mapsto e_i$ and $p_{g^ix}\mapsto a_i$.

Let $P_i$ be the indecomposable projective left $H$-module corresponding to $e_i$. That is, $P_i=He_i=\span\{e_i, a_{i}\}$.
As is well known, the paths $\omega\in\k\Bbb{Z}_n^a/ J^2$, $s(\omega)=e_i, t(\omega)=e_j$ form a basis of the morphism space $\operatorname{Hom}_H(P_j, P_i)$. See, for instance, \cite[Section 3]{BM03}. Namely, for any path $\omega\in\k\Bbb{Z}_n^a/ J^2$, it defines the morphism
$r_\omega:P_j\rightarrow P_i, u\mapsto u\omega $. From now on, by abuse of the notation, we shall identify a path $\omega$ with its corresponding map $r_\omega$. It is clear that $\operatorname{Im}(a_{i+1})=\ker(a_{i})=\span\{a_{i+1}\}.$ Moreover, we have left $H$-module isomorphisms $$\ker(a_{i-2})\cong P_i/ \operatorname{Im}(a_i)\cong S_i,$$ where $S_i$ is the simple left $H$-module corresponding to $e_i$.

It is apparent that $\k\Bbb{Z}_n^a/J^2$ is a gentle algebra.
In \cite[Theorem 3]{BM03}, Bekkert and Merklen described the indecomposable objects in the bounded derived category of a finite-dimensional gentle algebra. Their crucial observation is that it is enough to consider complexes where the differential is given by matrices whose entries are either zero or a path. The indecomposable objects are completely classified in terms of homotopy strings and bands. See \cite{Bob11} for the terminology and \cite[Section 2]{ALP16} for an overview.
The homotopy strings for $\mathcal{D}^b(\k\Bbb{Z}_n^a/J^2)$ are listed in \cite[Lemma 7.1]{ALP16}.

Combining \cite[Theorem 3]{BM03} and \cite[Lemma 7.1]{ALP16}, we have:
\begin{proposition}\label{prop:indecom}
The indecomposable objects in $D^b((A(n, 2, \mu, -1))^*)$ up to isomorphism are of the following form:
$$
M_{i, i-j}[p],
N_{i}[p],
$$
where \begin{eqnarray*}
M_{i, i-j}&=&0\rightarrow P_i\xrightarrow{a_{i-1}}P_{i-1}\xrightarrow{a_{i-2}} P_{i-2}\cdots  \xrightarrow{a_{i-j}}P_{i-j}\rightarrow 0,\\
N_i&=&0\rightarrow \ker(a_{i})\xrightarrow{\eta_{i+1}} P_{i+1} \xrightarrow{a_{i}}P_i\rightarrow 0,
\end{eqnarray*}
for any $i, p\in \Bbb{Z}, j\in \Bbb{Z}_+$, and the leftmost non-zero terms of $M_{i, i-j}$ and $N_{i}$ lie in degree $0$.
\end{proposition}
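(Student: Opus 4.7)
The plan is to invoke the Bekkert--Merklen classification \cite[Theorem 3]{BM03} and translate its combinatorial output into the concrete complexes appearing in the statement. Since $H \cong \k\mathbb{Z}_n^a/J^2$ has already been recognised as a gentle algebra (each vertex has one arrow in and one out, $J^2 = 0$ is generated by paths of length two, and the conditions (c)--(d) hold vacuously), \cite[Theorem 3]{BM03} reduces the task to (i) enumerating the homotopy strings and homotopy bands of $\k\mathbb{Z}_n^a/J^2$, and (ii) reading off the indecomposable complex attached to each of them, up to shift.

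For step (i) I would cite \cite[Lemma 7.1]{ALP16}, in which the homotopy strings of $\k\mathbb{Z}_n^a/J^2$ are listed explicitly. Homotopy bands can be excluded by invoking the derived discreteness of $H$ already established in Lemma \ref{thm:discrete}: a homotopy band would yield a one-parameter family of pairwise non-isomorphic indecomposables in $\mathcal{D}^b(H)$ sharing a common dimension vector, which directly contradicts the defining property of a derived discrete algebra.

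For step (ii) the translation splits in two. A \emph{direct} homotopy string of length $j$ starting at vertex $i$ is a word in the composable arrows $a_{i-1}, a_{i-2}, \ldots, a_{i-j}$; the Bekkert--Merklen recipe assigns to it a complex of indecomposable projectives with terms $P_i, P_{i-1}, \ldots, P_{i-j}$ whose differentials are right multiplication by the indicated arrows, which is exactly $M_{i, i-j}$. The only remaining strings produced by \cite[Lemma 7.1]{ALP16} in the radical-squared-zero setting are the short ``singular'' ones whose leftmost term is, by the Bekkert--Merklen prescription, the kernel of the initial differential together with its inclusion; with the initial differential equal to $a_i \colon P_{i+1} \to P_i$ one gets $\ker(a_i) = \k a_{i+1}$ and the inclusion $\eta_{i+1}$, reproducing $N_i$. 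The cohomological degree at which the word is placed gives the shift $[p]$.

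The main obstacle I anticipate is bookkeeping: aligning the reading direction of strings in \cite{ALP16} with the conventions fixed here (paths composed right to left, $a_i$ going from $e_i$ to $e_{i+1}$, leftmost non-zero term in degree $0$), and checking that different words produce non-isomorphic complexes so that the description is really a bijection. A small supplementary point is to verify that $N_i$ cannot be absorbed into the family $M_{i,i-j}[p]$, which follows because $\ker(a_i) \cong S_{i+2}$ has infinite projective dimension over the non-semisimple self-injective algebra $H$, so $N_i$ admits no quasi-isomorphic replacement by a bounded complex of projectives.
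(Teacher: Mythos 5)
Your proposal follows essentially the same route as the paper, which likewise obtains the statement by combining the Bekkert--Merklen classification \cite[Theorem 3]{BM03} with the explicit list of homotopy strings in \cite[Lemma 7.1]{ALP16}; the extra details you supply (excluding homotopy bands via derived discreteness, identifying $\ker(a_i)\cong S_{i+2}$, and noting that $N_i$ is not a shift of any $M_{i,i-j}$) are correct and only make explicit what the paper leaves implicit.
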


Now we turn to determine the tensor product of indecomposables in $\mathcal{D}^b(H)$. Without loss of generality, we assume $A(n, 2, \mu, -1))$ is of nilpotent type, that is, $\mu(1-g^2)=0$.
The formulate
\begin{eqnarray*}
\Delta(e_i)&=&\sum_{j+l=i} e_j\otimes e_l,\;\;\varepsilon(e_i)=\delta_{i, 0}, \;\;S(e_i)=e_{-i},\\
\Delta(a_i)&=&\sum_{j+l=i}(e_j\otimes a_l+(-1)^l a_j\otimes e_l),\;\;\varepsilon(a_i)=0,\;\;S(a_i)=(-1)^ia_{-i-1},
\end{eqnarray*}
determine the Hopf algebra structure of $H$ (see \cite{Cil93, EGST06}).
Moreover, we have the following left $H$-module isomorphisms (see \cite[Lemma 3.1 and Proposition 3.3]{Wan18}):
\begin{eqnarray*}
&&S_i\otimes S_j\cong S_{i+j},\\
&&P_i\otimes S_j\cong S_j\otimes P_i \cong P_{i+j},\\
&&P_i\otimes P_j\cong P_{i+j+1}\oplus P_{i+j},
\end{eqnarray*}
where $0\leq i, j\leq n-1$.

Based on the above arguments, we have the following proposition.
\begin{proposition}\label{prop:tensor}
The followings hold in ${D}^b((A(n, 2, \mu, -1))^*)$:
\begin{itemize}
  \item[(1)]$N_i[p]\otimes N_j[q]\cong N_{i+j}[p+q-2]$;
  \item[(2)]$M_{i, i-k}[p]\otimes N_j[q]\cong N_j[q]\otimes M_{i, i-k}[p]\cong M_{i+j, i-k+j}[p+q-2]$;
  \item[(3)]$M_{i, i-k}[p]\otimes M_{j, j-t}[q]\cong M_{i+j+1, i+j+1-\operatorname{min}\{k,t\}}[p+q]\oplus M_{i+j-\operatorname{max}\{k,t\}, i+j-k-t}[p+q-\operatorname{max}\{k,t\}]$,
\end{itemize}
where $i, j, p, q\in \Bbb{Z}$ and $k, t\in \Bbb{Z}_+$.
\end{proposition}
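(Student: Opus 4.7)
The plan is to reduce everything to the case $p = q = 0$ via the shift identity $X^\bullet[p] \otimes Y^\bullet[q] \cong (X^\bullet \otimes Y^\bullet)[p+q]$ from Subsection \ref{subsection2.2}, and then to exploit a key identification in $\mathcal{D}^b(H)$: a direct computation of the cohomology of $N_i = (0 \to \ker(a_i) \xrightarrow{\eta_{i+1}} P_{i+1} \xrightarrow{a_i} P_i \to 0)$ gives $H^0(N_i) = H^1(N_i) = 0$ (since $\eta_{i+1}$ is the inclusion of $\ker a_i$) and $H^2(N_i) \cong S_i$, so that the projection $P_i \twoheadrightarrow S_i$ induces a quasi-isomorphism $N_i \xrightarrow{\sim} S_i[-2]$.

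Items (1) and (2) then follow from the module-level tensor formulas recalled in Section \ref{section4}. For (1), $N_i \otimes N_j \cong S_i[-2] \otimes S_j[-2] \cong S_{i+j}[-4] \cong N_{i+j}[-2]$. For (2), $M_{i,i-k} \otimes N_j \cong M_{i,i-k} \otimes S_j[-2]$, and the termwise isomorphism $P_l \otimes S_j \cong P_{l+j}$ (sending $e_l \otimes s_j \mapsto e_{l+j}$ and $a_l \otimes s_j \mapsto a_{l+j}$) transports each differential $a_{l-1} \otimes \id_{S_j}$ into the arrow $a_{l+j-1}$, giving $M_{i,i-k} \otimes S_j \cong M_{i+j, i+j-k}$; the opposite composition $N_j \otimes M_{i,i-k}$ is handled identically.

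For item (3), the plan is to compute the tensor total complex of $M_{i,i-k} \otimes M_{j,j-t}$ directly. At each bidegree $(s,r) \in [0,k] \times [0,t]$, decompose $P_{i-s} \otimes P_{j-r} = A_{s,r} \oplus B_{s,r}$, where $A_{s,r} \cong P_{i+j-s-r}$ is the submodule generated by $e_{i-s} \otimes e_{j-r}$ (with socle $e_{i-s} \otimes a_{j-r} + (-1)^{j-r} a_{i-s} \otimes e_{j-r}$) and $B_{s,r} \cong P_{i+j-s-r+1}$ is the complementary submodule spanned by $a_{i-s} \otimes e_{j-r}$ and $a_{i-s} \otimes a_{j-r}$. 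Using the coproduct of $a_l$ and the Koszul sign $(-1)^s$, one verifies that the only nonzero differential components are the identity isomorphisms $A_{s,r} \xrightarrow{\sim} B_{s+1,r}$ (coefficient $1$) and $A_{s,r} \xrightarrow{\sim} B_{s,r+1}$ (coefficient $(-1)^{s+j-r}$), together with arrow maps $A_{s,r} \to A_{s,r+1}$ and $B_{s,r} \to B_{s,r+1}$ of the form $(-1)^s r_{a_?}$. A systematic Gaussian elimination cancelling the identity components---pairing $A_{s,r}$ with $B_{s,r+1}$ for $r < t$ when $k \le t$, or with $B_{s+1,r}$ for $s < k$ when $k \ge t$---reduces the complex to its minimal form, which assembles into the claimed direct sum $M_{i+j+1, i+j+1-\min\{k,t\}} \oplus M_{i+j-\max\{k,t\}, i+j-k-t}[-\max\{k,t\}]$.

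The main obstacle is item (3): identifying the explicit differential components after the decomposition $P_a \otimes P_b = A \oplus B$ requires careful bookkeeping of signs from the Hopf coproduct and the Koszul convention, and the Gaussian elimination is a fiddly combinatorial procedure on the $[0,k] \times [0,t]$ rectangle. In particular, after each cancellation one must track the induced arrow maps between the surviving $A$- and $B$-summands and verify that they assemble precisely into the two claimed string complexes, rather than yielding a longer indecomposable string.
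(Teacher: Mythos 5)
Your proposal is correct, and on the hard part (item (3)) it is essentially the paper's own argument: the paper likewise splits each term via an explicit isomorphism $f_{i,j}\colon P_i\otimes P_j\xrightarrow{\sim}P_{i+j+1}\oplus P_{i+j}$, transports the differentials into matrices $L'_t$, and performs elementary row operations (your ``Gaussian elimination'') to cancel the identity components and read off the two surviving string complexes. Your choice of complement ($B_{s,r}$ spanned by $a_{i-s}\otimes e_{j-r}$ and $a_{i-s}\otimes a_{j-r}$) differs from the paper's, but it is a valid submodule complement to $A_{s,r}$, and your computed components -- isomorphisms $A_{s,r}\to B_{s+1,r}$ with coefficient $1$ and $A_{s,r}\to B_{s,r+1}$ with coefficient $(-1)^{s+j-r}$, plus radical maps $A_{s,r}\to A_{s,r+1}$ and $B_{s,r}\to B_{s,r+1}$, and nothing else -- check out against the coproduct formulas. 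Where you genuinely diverge is in (1) and (2): the paper invokes Biglari's K\"unneth formula to compute $H^m$ of the tensor complex and then identifies the object from its cohomology, whereas you use the quasi-isomorphism $N_i\simeq S_i[-2]$ together with the termwise module isomorphisms $P_l\otimes S_j\cong P_{l+j}$. Your route is tighter: for (2) the cohomology alone ($S_{i+j+1}$ in degree $2$ and $S_{i+j-k}$ in degree $k+2$) does not by itself distinguish $M_{i+j,i+j-k}[-2]$ from $N_{i+j+1}\oplus N_{i+j-k}[-k]$, so the paper's argument implicitly needs an extra step that your explicit chain-level isomorphism supplies. One small correction: with the stated coproduct the module isomorphism $P_l\otimes S_j\to P_{l+j}$ must send $a_l\otimes s_j\mapsto(-1)^j a_{l+j}$, so the transported differentials acquire a uniform sign $(-1)^j$; this is harmless, since negating alternate terms gives an isomorphic complex, but it should be recorded when you write out (2) in full.
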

\begin{proof}
\begin{itemize}
  \item[(1)]
According to \cite[Example 3.3 and Theorem 4.1]{Big07}, we know that
 $$ H^m(N_i\otimes N_j)\cong\left\{
\begin{aligned}
S_{i+j},&~~~ \text{if} ~~~ m=4; \\
0\;\;,& ~~~otherwise.
\end{aligned}
\right.
$$
It follows that
$$N_i\otimes N_j\cong N_{i+j}[-2],$$
which means that
$$N_i[p]\otimes N_j[q]\cong (N_{i}\otimes N_j)[p+q]\cong N_{i+j}[p+q-2].$$
\item[(2)]We shall adopt the same procedure as in the proof of $(1)$. Using \cite[Example 3.3 and Theorem 4.1]{Big07}, we have
$$ H^m(M_{i, i-k}\otimes N_j)\cong\left\{
\begin{aligned}
 P_{i+j}\;,& ~~~ \text{if} ~~~k=0~~~ \text{and}~~~ m=2; \\
 S_{i+j+1},&~~~ \text{if} ~~~k\geq1~~~ \text{and}~~~ m=2; \\
 S_{i+j-k},&~~~ \text{if} ~~~k\geq1~~~ \text{and}~~~ m=k+2; \\
0\;\;\;,& ~~~otherwise.
\end{aligned}
\right.
    $$
This indicates that $$M_{i,i-k}\otimes N_j\cong M_{i+j, i-k+j}[-2].$$
A similar argument shows that $$N_j\otimes M_{i,i-k} \cong M_{i+j, i-k+j}[-2].$$
Thus we have $$M_{i, i-k}[p]\otimes N_j[q]\cong N_j[q]\otimes M_{i, i-k}[p]\cong M_{i+j, i-k+j}[p+q-2].$$
\item[(3)]
An argument similar to the one used in the proof of $(1)$ shows that
$$
 H^m(M_{i, i-k}\otimes M_{j, j-t})\cong\left\{
\begin{aligned}
 P_{i+j+1}\oplus P_{i+j},\;\;\;\;\;\;& ~~~ \text{if} ~~~k=t=0~~~ \text{and}~~~ m=0; \\
P_{i+j+1},\;\;\;\;\;\;\;\;\;\;&~~~ \text{if} ~~~k=0, t\geq 1~~~ \text{and}~~~ m=0; \\
P_{i+j-t},\;\;\;\;\;\;\;\;\;\;&~~~ \text{if} ~~~k=0, t\geq 1~~~ \text{and}~~~ m=t; \\
 P_{i+j+1},\;\;\;\;\;\;\;\;\;\;&~~~ \text{if} ~~~k\geq 1, t= 0~~~ \text{and}~~~ m=0; \\
 P_{i+j-k},\;\;\;\;\;\;\;\;\;\;&~~~ \text{if} ~~~k\geq 1, t= 0~~~ \text{and}~~~ m=k; \\
 S_{i+j-t+1}\oplus S_{i+j-k+1},&~~~ \text{if} ~~~k= t\geq 1~~~ \text{and}~~~ m=k; \\
 S_{i+j-k+1},\;\;\;\;\;\;\;\;&~~~ \text{if} ~~~k\neq t\geq 1, t\geq1~~~ \text{and}~~~ m=k; \\
 S_{i+j-t+1},\;\;\;\;\;\;\;\;&~~~ \text{if} ~~~k\neq t\geq 1, t\geq1~~~ \text{and}~~~ m=t; \\
  S_{i+j+2},\;\;\;\;\;\;\;\;\;\;&~~~ \text{if} ~~~k\geq 1, t\geq 1~~~ \text{and}~~~ m=0; \\
  S_{i+j-k-t},\;\;\;\;\;\;\;\;&~~~ \text{if} ~~~k\geq 1, t\geq 1~~~ \text{and}~~~ m=k+t; \\
0,\;\;\;\;\;\;\;\;\;\;\;\;\;\;& ~~~otherwise.
\end{aligned}
\right.
$$
When $k=0$, or $t=0$, or $k=t$, it is straightforward to show that
$$M_{i, i-k}\otimes M_{j, j-t} \cong M_{i+j+1, i+j+1-\operatorname{min}\{k,t\}}\oplus M_{i+j-\operatorname{max}\{k,t\}, i+j-k-t}[-\operatorname{max}\{k,t\}].$$
It remains to consider the case that $k\neq t\geq 1, t\geq1$. Without loss of generality we may assume $k> t\geq1$.
By definition, the complex
 $M_{i, i-k}\otimes M_{j, j-t}$
 equals
 \begin{eqnarray*}
&&0\rightarrow P_i\otimes P_j\xrightarrow{\tiny\left(\begin{array}{cc}
a_{i-1}\otimes 1\\
1\otimes a_{j-1}
 \end{array}
\right)}
P_{i-1}\otimes P_j\oplus P_i\otimes P_{j-1}\\
&&\rightarrow \cdots\\
&&\rightarrow P_{i-t}\otimes P_j\oplus P_{i-t+1}\otimes P_{j-1}\oplus\cdots\oplus P_i\otimes P_{j-t}\\
&&\xrightarrow{L_t}P_{i-t-1}\otimes P_j\oplus P_{i-t}\otimes P_{j-1}\oplus \cdots \oplus P_{i-1}\otimes P_{j-t}\\
&&\rightarrow \cdots\\
&&\xrightarrow{\tiny\left(\begin{array}{cc}
(-1)^{k}1\otimes a_{j-t}& a_{i-k}\otimes 1
 \end{array}
\right)}
P_{i-k}\otimes P_{j-t}\\
&&\rightarrow 0,
\end{eqnarray*}
where $$L_t=\small\left(\begin{array}{ccccc}
a_{i-t-1}\otimes 1\\
(-1)^t1\otimes a_{j-1}&a_{i-t}\otimes 1\\
&(-1)^{t-1}1\otimes a_{j-2}\\
&&\ddots\\
&&&a_{i-1}\otimes 1
 \end{array}
\right).$$
We define $f_{i,j}\in \operatorname{Hom}_{\k}(P_i\otimes P_j, P_{i+j+1}\oplus P_{i+j})$ on the basis by
  \begin{eqnarray*}
  &&f_{i,j}(e_i\otimes e_j)=e_{i+j},\\
  &&f_{i,j}(\frac{1}{2}(-1)^{j-1}e_{i}\otimes a_j+\frac{1}{2}a_i\otimes e_j)=e_{i+j+1},\\
  &&f_{i,j}(e_i\otimes a_j+(-1)^ja_i\otimes e_j)=a_{i+j},\\
  &&f_{i,j}(a_i\otimes a_j)=a_{i+j+1}.
  \end{eqnarray*}
  It is straightforward to show that $f_{i,j}$ is a left $(A(n, 2, 0, -1))^*$-module isomorphism.
  This indicates that the following diagram
  \begin{displaymath}\small
\xymatrix{
{P_{i-t}\otimes P_j\oplus \cdots\oplus P_i\otimes P_{j-t}}\ar[d]^{L_t} \ar[rr]^{\cong}&&{P_{i+j-t+1}\oplus P_{i+j-t}\oplus\cdots\oplus P_{i+j-t}}\ar[d]^{L^\prime_t}\\
{P_{i-t-1}\otimes P_j\oplus \cdots \oplus P_{i-1}\otimes P_{j-t}}\ar[rr]^{\cong}&&{P_{i+j-t}\oplus P_{i+j-t-1}\oplus\cdots\oplus  P_{i+j-t-1}}
}
\end{displaymath}
  commutes, where
  $$L^\prime_t=\small\left(\begin{array}{ccccccccc}
  \frac{(-1)^{j-1}}{2}a_{i+j-t}&1\\
  0&\frac{(-1)^{j}}{2}a_{i+j-t-1}\\
  \frac{(-1)^{t}}{2}a_{i+j-t}&(-1)^{j+t}\\
  0&\frac{(-1)^{t}}{2}a_{i+j-t-1}\\
  &&\ddots\\
 &&& \frac{(-1)^{j-t-1}}{2}a_{i+j-t}&1\\
  &&&0&\frac{(-1)^{j-t}}{2}a_{i+j-t-1}
 \end{array}
\right).$$
For any $r, s\geq1$, $0\leq m\leq n-1$ and $l\in\k$, let $(R_r+la_mR_s)$ be the elementary matrix corresponding to the elementary row operator which adds $la_m$ times row $s$ to row $r$.
Let
\begin{eqnarray*}
K_1&=&(R_{2t+2}+a_{i+j-t-1}R_{2t-1})(R_{2t+1}+(-1)^{j-t+1}R_{2t-1})\cdots\\
&&(R_4+(-1)^{t-1}a_{i+j-t-1}R_1)(R_3+(-1)^{j+t-1}R_1)\\
&&(R_{2t+2}+\frac{(-1)^{j-t+1}}{2}a_{i+j-t-1}R_{2t+1})\cdots(R_{2}+\frac{(-1)^{j-1}}{2}a_{i+j-t-1}R_1),\\
K_2&=&(R_{2t+2}-a_{i+j-t}R_{2t-1})(R_{2t+1}+(-1)^{j-t+1}R_{2t-1})\cdots\\
&&(R_4+(-1)^{t}a_{i+j-t}R_1)(R_3+(-1)^{j+t-1}R_1)\\
&&(R_{2t+2}+\frac{(-1)^{j-t+1}}{2}a_{i+j-t}R_{2t+1})\cdots(R_{2}+\frac{(-1)^{j-1}}{2}a_{i+j-t}R_1).
\end{eqnarray*}
It is not hard to see that
\begin{eqnarray*}
K_1L^\prime_t=\small\left(\begin{array}{ccccccccc}
  0&1\\
  0&0\\
  &&0&1\\
  &&0&0\\
  &&&&\ddots\\
 &&&&& 0&1\\
  &&&&& 0&0
 \end{array}
\right)
K_2.
\end{eqnarray*}
It turns out that the complex
 $M_{i, i-k}\otimes M_{j, j-t}$
  does not contain  $M_{i+j+1, i+j+1-\operatorname{max}\{k,t\}}$ or $M_{i+j+1, i+j-k-t}$ as a direct summand, which follows that
  $$M_{i, i-k}\otimes M_{j, j-t} \cong M_{i+j+1, i+j+1-\operatorname{min}\{k,t\}}\oplus M_{i+j-\operatorname{max}\{k,t\}, i+j-k-t}[-\operatorname{max}\{k,t\}].$$
\end{itemize}
\end{proof}

\end{document}